\theoremstyle{plain}
\newtheorem{theorem}{Theorem}[section]
\newtheorem{proposition}{Proposition}[section]
\newtheorem{remark}{Remark}[section]
\newtheorem{lemma}{Lemma}[section]
\newtheorem{example}{Example}[section]
\title[The Gauss image of $\lambda$-hypersurfaces and a Bernstein type problem]
{The Gauss image of $\lambda$-hypersurfaces and a Bernstein type problem }
\author{Qing-Ming Cheng and  Guoxin Wei}
\address{Qing-Ming Cheng \\  Department of Applied Mathematics, Faculty of Sciences,
Fukuoka  University, 814-0180, Fukuoka,  Japan, cheng@fukuoka-u.ac.jp}
\address{Guoxin Wei \\  School of Mathematical Sciences, South China Normal University,
510631, Guangzhou,  China, weiguoxin@tsinghua.org.cn}
\begin{document}
\maketitle

\begin{abstract}
\noindent  In this paper, our purpose is to study rigidity theorems for $\lambda$-hypersurfaces
in Euclidean space under Gauss map. As  a Bernstein type problem for $\lambda$-hypersurfaces,
we  prove that an entirely graphic $\lambda$-hypersurface in Euclidean space is a hyperplane.
\end{abstract}

\footnotetext{ 2010 \textit{ Mathematics Subject Classification}: 53C44, 53C42.}

\footnotetext{{\it Key words and phrases}: Bernstein problem, the weighted area functional,
$\lambda$-hypersurfaces, the weighted volume-preserving mean curvature flow.}

\footnotetext{The first author was partially  supported by JSPS Grant-in-Aid for Scientific Research (B): No. 24340013
and Challenging Exploratory Research No. 25610016. The second author was supported by NSFC No. 11371150.}

\section {Introduction}

\noindent
Let $X: M\rightarrow \mathbb{R}^{n+1}$ be a smooth $n$-dimensional immersed hypersurface in the $(n+1)$-dimensional
Euclidean space $\mathbb{R}^{n+1}$. In \cite{cw2}, Cheng and Wei have introduced the notation of
the weighted volume-preserving mean curvature flow, which is defined as the following:
a family $X(\cdot, t)$ of smooth immersions
$$
X(\cdot, t):M\to  \mathbb{R}^{n+1}
$$
with  $X(\cdot, 0)=X(\cdot)$ is called {\it a weighted volume-preserving mean curvature flow} if
\begin{equation}
\dfrac{\partial X(t)}{\partial t}=-\alpha(t) N(t) +\mathbf{H}(t)
\end{equation}
holds, where
$$
\alpha(t) =\dfrac{\int_MH(t)\langle N(t), N\rangle e^{-\frac{|X|^2}2}d\mu}{\int_M\langle N(t), N\rangle e^{-\frac{|X|^2}2}d\mu},
$$
 $\mathbf{H}(t)=\mathbf{H}(\cdot,t)$ and $N(t)$ denote the mean curvature vector  and the unit normal vector of hypersurface
 $M_t=X(M^n,t)$ at point $X(\cdot, t)$, respectively  and   $N$ is the unit normal vector of $X:M\to  \mathbb{R}^{n+1}$.
One  can prove  that the flow (1.1) preserves the weighted volume $V(t)$ defined by
$$
V(t)=\int_M\langle X(t),N\rangle e^{-\frac{|X|^2}{2}}d\mu.
$$
\noindent
{\it The weighted area functional}
$A:(-\varepsilon,\varepsilon)\rightarrow\mathbb{R}$ is defined by
$$
A(t)=\int_Me^{-\frac{|X(t)|^2}{2}}d\mu_t,
$$
where $d\mu_t$ is the area element of $M$ in the metric induced by $X(t)$.
Let $X(t):M\rightarrow \mathbb{R}^{n+1}$ with $X(0)=X$ be a  variation of $X$.
If $V(t)$ is constant for any $t$, we call  $X(t):M\rightarrow \mathbb{R}^{n+1}$ {\it  a weighted volume-preserving variation of $X$}.
Cheng and Wei \cite{cw2} have proved that  $X:M\rightarrow \mathbb{R}^{n+1}$ is a critical point of the weighted area functional $A(t)$
for all weighted volume-preserving variations if and only  if there exists constant $\lambda$ such that
\begin{equation}\label{eq:11.1}
\langle X, N\rangle +H=\lambda.
\end{equation}
An immersed hypersurface $X(t):M\rightarrow \mathbb{R}^{n+1}$ is called {\it a $\lambda$-hypersurface} if the equation \eqref{eq:11.1} is satisfied.

\begin{remark}
When $\lambda=0$,  the $\lambda$-hypersurface becomes   a self-shrinker of the mean curvature flow.
\end{remark}

\begin{example}
The $n$-dimensional Euclidean space $\mathbb{R}^{n}$ is a complete and non-compact $\lambda$-hypersurface
in $\mathbb{R}^{n+1}$ with $\lambda=0$.
\end{example}
\begin{example}
The $n$-dimensional sphere $S^n(r)$ with radius $r>0$ is a compact $\lambda$-hypersurface in $\mathbb{R}^{n+1}$
with $\lambda=\frac{n}r-r$.
\end{example}
\begin{example}
For $1\leq k\leq n-1$, the $n$-dimensional cylinder  $S^k(r)\times \mathbb{R}^{n-k}$ with radius $r>0$ is a complete
and non-compact $\lambda$-hypersurface in $\mathbb{R}^{n+1}$ with $\lambda=\frac{k}r-r$.
\end{example}

\noindent
It is well know that the Gauss map of  hypersurfaces in $\mathbb{R}^{n+1}$ plays a very important role in study of  hypersurfaces.
For constant mean curvature surfaces in  $\mathbb{R}^{3}$, a beautiful result of Hoffeman, Osserman and Schoen \cite{HOS} shows
that the plane and the right circular cylinder are the only complete surfaces with constant mean curvature in $\mathbb{R}^{3}$,
of which the image under Gauss map lies a closed hemisphere.

\noindent
Our purpose in this paper is to study  $\lambda$-hypersurfaces by Gauss map.  We want to attack the following problem:

\noindent
{\bf   Problem}.
Let $X:M\rightarrow \mathbb{R}^{n+1}$ be an $n$-dimensional complete $\lambda$-hypersurface in the Euclidean space $\mathbb{R}^{n+1}$.
If the image under the Gauss map is contained in an open hemisphere, then is $X:M\rightarrow \mathbb{R}^{n+1}$  a hyperplane? If the image under the Gauss map is contained in a closed hemisphere, then is $X:M\rightarrow \mathbb{R}^{n+1}$  a hyperplane or a cylinder whose cross section is an $(n-1)$-dimensional $\lambda$-hypersurface in $\mathbb{R}^{n}$?

\noindent
For the above  problem, we solve it under the assumption of proper.

\begin{theorem}\label{theorem 1}
Let $X:M\rightarrow \mathbb{R}^{n+1}$ be an $n$-dimensional complete properly $\lambda$-hypersurface in the Euclidean space $\mathbb{R}^{n+1}$. If the image under the Gauss map is contained in an open hemisphere, then $X:M\rightarrow \mathbb{R}^{n+1}$ is a hyperplane. If the image under the Gauss map is contained in a closed hemisphere, then $X:M\rightarrow \mathbb{R}^{n+1}$ is a hyperplane or a cylinder whose cross section is an $(n-1)$-dimensional $\lambda$-hypersurface in $\mathbb{R}^{n}$.
\end{theorem}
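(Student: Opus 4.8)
The plan is to reduce both cases to the behaviour of the single scalar function $u=\langle N,v\rangle$, where $v$ is the unit vector at the centre of the hemisphere and $N$ is chosen so that $u>0$ in the open case and $u\ge 0$ in the closed case. Writing $\mathcal{L}=\Delta-\langle X,\nabla\,\cdot\,\rangle$ for the drift Laplacian associated with the Gaussian weight $e^{-|X|^2/2}$, the first step is the identity
\begin{equation}\label{eq:Lu}
\mathcal{L}\langle N,v\rangle=-|A|^2\langle N,v\rangle ,
\end{equation}
where $|A|^2$ is the squared norm of the second fundamental form. This is a direct computation: differentiating $\langle N,v\rangle$ twice, using the Weingarten and Codazzi equations together with $H=\lambda-\langle X,N\rangle$, one checks that the term $\langle\nabla H,v^{\top}\rangle$ produced by $\Delta\langle N,v\rangle$ is exactly cancelled by the drift term $\langle X,\nabla\langle N,v\rangle\rangle$, leaving $-|A|^2\langle N,v\rangle$ with no residual $\lambda$-term. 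I regard this computation as routine rather than the heart of the matter.

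For the open hemisphere case I would argue as follows. Since $u>0$ everywhere, set $w=\log u$; then \eqref{eq:Lu} gives $\mathcal{L}w=-|A|^2-|\nabla w|^2$. Because $\mathcal{L}$ is self-adjoint with respect to $d\mu_f=e^{-|X|^2/2}d\mu$, multiplying by a cut-off $\eta^2$ and integrating by parts yields, after one application of Cauchy--Schwarz,
\begin{equation}\label{eq:caccioppoli}
\int_M\Big(|A|^2+\tfrac12|\nabla w|^2\Big)\eta^2\,d\mu_f\le 2\int_M|\nabla\eta|^2\,d\mu_f .
\end{equation}
Here properness is essential: it guarantees that the sublevel sets $\{|X|\le R\}$ are compact and that $M$ has polynomial volume growth, so that choosing $\eta=\eta_R$ equal to $1$ on $\{|X|\le R\}$ and cutting off linearly to $0$ on $\{R\le |X|\le 2R\}$ gives $|\nabla\eta_R|\le C/R$ and $\int_M|\nabla\eta_R|^2\,d\mu_f\le C R^{-2}\int_{\{|X|\ge R\}}e^{-|X|^2/2}\,d\mu\to 0$. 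Letting $R\to\infty$ forces $|A|\equiv 0$, so $M$ is totally geodesic, i.e.\ a hyperplane.

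For the closed hemisphere case I would combine \eqref{eq:Lu} with the strong maximum principle. Since $u\ge 0$ and $\mathcal{L}u=-|A|^2u\le 0$, the function $u$ is a nonnegative supersolution; hence either $u>0$ everywhere, in which case the argument above applies verbatim and $M$ is a hyperplane, or $u$ attains an interior zero and is therefore identically $0$. In the latter case $\langle N,v\rangle\equiv 0$ means that $v$ is tangent to $M$ at every point, so the straight lines in the direction $v$ lie on $M$ and $M=M'\times\mathbb{R}$ with the $\mathbb{R}$-factor in the direction $v$. A short computation, using that the $v$-direction contributes nothing to either $\langle X,N\rangle$ or $H$, shows that the cross-section $M'$ satisfies $\langle X,N\rangle+H=\lambda$ in the hyperplane $v^{\perp}\cong\mathbb{R}^n$, i.e.\ $M$ is a cylinder over an $(n-1)$-dimensional $\lambda$-hypersurface.

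The step I expect to be the main obstacle is making the integral estimate \eqref{eq:caccioppoli} rigorous, and in particular establishing that properness yields the volume control needed for $\int_M|\nabla\eta_R|^2\,d\mu_f\to 0$; once polynomial volume growth is available the Gaussian factor $e^{-|X|^2/2}$ dominates any polynomial and the limit follows. A secondary point requiring care is the passage to $w=\log u$ in the open case, which is legitimate because $u$ is bounded away from $0$ on each compact set $\{|X|\le 2R\}$, so every integral in the cut-off argument is finite before taking $R\to\infty$.
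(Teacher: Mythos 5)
Your proposal is correct and follows essentially the same route as the paper: your identity $\mathcal{L}\langle N,v\rangle=-|A|^2\langle N,v\rangle$ is the scalar component form of the paper's Lemma 3.2 (the Gauss map is an $e^{-|X|^2/2}$-weighted harmonic map), and the polynomial volume growth you flag as the key obstacle is exactly the paper's Lemma 3.1 (Cheng--Wei), after which your cut-off/Caccioppoli argument with the Gaussian weight and the strong maximum principle splitting in the closed case is precisely the Ding--Xin--Yang scheme that the paper invokes. The only point worth noting is that properness alone does not trivially give polynomial area growth for $\lambda$-hypersurfaces --- this is a genuine theorem (Lemma 3.1), not a formality --- but since you correctly identified and isolated this as the needed input, the proof is complete modulo that cited result, just as in the paper.
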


\begin{theorem}\label{theorem 2}
Let $X:M\rightarrow \mathbb{R}^{n+1}$ be an $n$-dimensional complete properly $\lambda$-hypersurface in the Euclidean space $\mathbb{R}^{n+1}$. If the image under the Gauss map is contained in $S^n\setminus \overline{S}^{n-1}_{+}$, then $X:M\rightarrow \mathbb{R}^{n+1}$ is a hyperplane.
\end{theorem}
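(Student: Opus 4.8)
The plan is to reduce to Theorem~\ref{theorem 1} and then eliminate the cylinder alternative by descending to the cross-section. The analytic engine behind both theorems is the drift Laplacian $\mathcal{L}=\Delta-\langle X,\nabla\,\cdot\,\rangle$, which is self-adjoint with respect to the weighted measure $d\tilde\mu=e^{-|X|^2/2}d\mu$. Writing $f_a=\langle N,a\rangle$ for a fixed $a\in\mathbb{R}^{n+1}$, the Weingarten relation $\nabla f_a=-A(a^{\top})$ together with Codazzi and the $\lambda$-hypersurface identity $\nabla H=A(X^{\top})$, obtained by differentiating $\langle X,N\rangle+H=\lambda$, collapse the two first-order terms against each other and give the clean identity
\[
\mathcal{L}\langle N,a\rangle=-|A|^2\langle N,a\rangle .
\]
Because the immersion is proper it has polynomial Euclidean volume growth, hence finite weighted volume, and the bounded function $\langle N,a\rangle$ may be integrated against $d\tilde\mu$ with no boundary term at infinity; this is where properness is indispensable. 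Integrating the identity yields $\int_M|A|^2\langle N,a\rangle\,d\tilde\mu=0$ for every $a$, which forces a hyperplane when $\langle N,a\rangle>0$ (open hemisphere) and, via the strong maximum principle, forces $\langle N,a\rangle\equiv0$ and hence a cylinder when $\langle N,a\rangle$ merely vanishes somewhere (closed hemisphere). I will take this dichotomy, namely Theorem~\ref{theorem 1}, as given.

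For Theorem~\ref{theorem 2} I first read the hypothesis as confining the Gauss image to a closed hemisphere $\{\,v:\langle v,a\rangle\ge 0\,\}$ whose bounding equator $\{\langle v,a\rangle=0\}\cong S^{n-1}$ contains the excised closed half-equator $\overline{S}^{\,n-1}_{+}$. Theorem~\ref{theorem 1} then applies: $M$ is either a hyperplane, in which case we are done, or a cylinder $M'\times\mathbb{R}a$, where the vanishing function $\langle N,a\rangle\equiv0$ identifies the axis with the hemisphere normal $a$. In the cylinder case the cross-section $M'\subset a^{\perp}\cong\mathbb{R}^{n}$ is again a complete, properly immersed $\lambda$-hypersurface for the same $\lambda$, and at a point $(x',t)$ the unit normal of $M$ equals that of $M'$; hence the Gauss image of $M$ coincides with that of $M'$ and lies in the equatorial sphere $a^{\perp}\cap S^{n}\cong S^{n-1}$. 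The assumption that this image avoids the \emph{closed} half-equator $\overline{S}^{\,n-1}_{+}$ says precisely that the Gauss image of $M'$ is contained in the complementary \emph{open} hemisphere of $S^{n-1}$.

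I may now invoke the open-hemisphere case of Theorem~\ref{theorem 1}, applied one dimension lower to $M'\subset\mathbb{R}^{n}$: an image in an open hemisphere forces $M'$ to be a hyperplane, whence $M=M'\times\mathbb{R}a$ is itself a hyperplane, so the conclusion holds in every branch. The steps I expect to be routine are the verification that $M'$ inherits completeness, properness, and the $\lambda$-equation with unchanged constant, and that the normals of $M$ and $M'$ agree. The point requiring the most care, and the real content of the theorem, is the initial reduction: one must confirm that excising the closed half-equator both confines the full Gauss image to a closed hemisphere and leaves only an open hemisphere available inside the equator, so that the degenerate (cylinder) branch of Theorem~\ref{theorem 1} is driven into its rigid (open-hemisphere) branch after descent. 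It is exactly this interplay between the closed removed set and the open residual hemisphere that upgrades the ``hyperplane or cylinder'' conclusion of Theorem~\ref{theorem 1} to ``hyperplane.''
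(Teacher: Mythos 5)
Your proposal fails at its very first step, and the failure is geometric, not technical: the set $S^n\setminus \overline{S}^{\,n-1}_{+}$ is \emph{not} contained in any closed hemisphere. Here $\overline{S}^{\,n-1}_{+}$ is a closed half-equator, an $(n-1)$-dimensional closed half great sphere; its complement is an open, dense subset of $S^n$ of full measure, and it contains antipodal pairs (for instance both poles $\pm E_{n+1}$ when the half-equator is $\{v\in S^n: v_{n+1}=0,\ v_n\ge 0\}$). A set containing antipodal points lies in no open hemisphere, and since the boundary $S^{n-2}$ of the half-equator itself contains antipodal pairs, no closed hemisphere is disjoint from $\overline{S}^{\,n-1}_{+}$ either. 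So your reading of the hypothesis --- ``confining the Gauss image to a closed hemisphere whose bounding equator contains the excised half-equator'' --- replaces the actual hypothesis by a much stronger one. The hypotheses of Theorem \ref{theorem 1} (closed hemisphere) and Theorem \ref{theorem 2} (complement of a closed half-equator) are in fact incomparable, so no containment argument can reduce the latter to the former, and your subsequent descent to the cross-section never gets off the ground: a Gauss image in $S^n\setminus\overline{S}^{\,n-1}_{+}$ need not lie in any hemisphere, hence need not satisfy the dichotomy ``hyperplane or cylinder'' as an input. The paper's Remark 1.5 signals exactly how delicate this set is: it contains no great circle, yet adjoining a single point of $\overline{S}^{\,n-1}_{+}$ creates one (realized by the Gauss image of $S^1\times\mathbb{R}^{n-1}$), which is why the restriction is optimal and why the theorem has genuine content beyond the hemisphere case.

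For what it is worth, the pieces of your sketch that concern Theorem \ref{theorem 1} are sound and close in spirit to the paper: the identity $\mathcal{L}\langle N,a\rangle=-|A|^2\langle N,a\rangle$ does hold for $\lambda$-hypersurfaces (the constant $\lambda$ drops out on differentiating $\langle X,N\rangle+H=\lambda$), and it is the scalar shadow of the paper's Lemma \ref{lemma 3}, which says the full Gauss map is an $e^{-|X|^2/2}$-weighted harmonic map; properness enters through Lemma \ref{lemma 2}, the polynomial area growth estimate that makes the weighted volume finite. But the paper proves Theorem \ref{theorem 2} \emph{directly} from these two lemmas by the argument of Ding--Xin--Yang: one composes the weighted harmonic Gauss map with a suitable convex function that exists on the region $S^n\setminus\overline{S}^{\,n-1}_{+}$ and applies a Liouville-type theorem for weighted harmonic maps with finite weighted volume. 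That convex-function construction on the complement of a half-equator is the missing idea in your proposal, and without it --- or some substitute handling Gauss images that wander over most of the sphere --- your dimensional-reduction scheme cannot be repaired.
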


\begin{remark}
 The set $(S^n\setminus \overline{S}_{+}^{n-1})\cup \{p\}$ contains a great circle, here $p\in \overline{S}_{+}^{n-1}$, and the nontrivial $\lambda$-hypersurface $S^1\times \mathbb{R}^{n-1}\subset \mathbb{R}^{n+1}$ whose Gauss image is a great circle. Hence the Gauss image restriction in the theorem \ref{theorem 3} is optimal.
\end{remark}

\begin{remark}
Since  a $\lambda$-hypersurface in the Euclidean space $\mathbb{R}^{n+1}$ is a self-shrinker when $\lambda=0$,
we should remark that Ding, Xin and Yang \cite{[DXY]} have proved the same results for complete proper
self-shrinkers  in $\mathbb{R}^{n+1}$.
\end{remark}

\noindent
Furthermore, since the image of an  entire graphic $\lambda$-hypersurface under Gauss map  is contained in an open hemisphere, we  prove that
the assertion in the  problem is true for entire graphic $\lambda$-hypersurfaces.

\begin{theorem}\label{theorem 3}
Let $X:M\rightarrow \mathbb{R}^{n+1}$ be an $n$-dimensional entire graphic $\lambda$-hypersurface in the Euclidean space $\mathbb{R}^{n+1}$.
Then  $X:M\rightarrow \mathbb{R}^{n+1}$
is a hyperplane $\mathbb{R}^{n}$.
\end{theorem}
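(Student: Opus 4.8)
The plan is to deduce Theorem \ref{theorem 3} from Theorem \ref{theorem 1} by checking that an entire graphic $\lambda$-hypersurface automatically satisfies the three hypotheses of that theorem: completeness, properness, and the open-hemisphere condition on the Gauss image. Once these are verified, the open-hemisphere case of Theorem \ref{theorem 1} immediately forces $X$ to be a hyperplane, so the substantive geometric work has already been carried out there and the present proof is a reduction.

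First I would fix coordinates: an entire graphic hypersurface is $X(x)=(x,f(x))$ for some smooth $f:\mathbb{R}^n\to\mathbb{R}$, with induced metric $g_{ij}=\delta_{ij}+f_if_j$ and (say upward) unit normal $N=(1+|\nabla f|^2)^{-1/2}(-\nabla f,1)$. For completeness, note that $g_{ij}\geq \delta_{ij}$ as quadratic forms, so the intrinsic $g$-distance on $M$ dominates the Euclidean distance on the parameter space $\mathbb{R}^n$; hence any $g$-Cauchy sequence is Euclidean-Cauchy, converges in $\mathbb{R}^n$, and its image converges on $M$ by smoothness of $f$. Thus the graph is complete. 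For properness, given a compact set $K\subset\mathbb{R}^{n+1}$, the preimage $X^{-1}(K)$ is closed by continuity and bounded because the projection of $K$ onto the first $n$ coordinates is bounded; hence $X^{-1}(K)$ is compact and $X$ is proper.

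Next I would address the Gauss image. Since $\langle N,e_{n+1}\rangle=(1+|\nabla f|^2)^{-1/2}>0$ holds at every point, the image of the Gauss map is contained in the open hemisphere $\{y\in S^n:\langle y,e_{n+1}\rangle>0\}$. With completeness, properness, and this open-hemisphere condition all in hand, Theorem \ref{theorem 1} applies directly and yields that $X:M\rightarrow\mathbb{R}^{n+1}$ is a hyperplane. I do not anticipate a genuine obstacle here: the only points requiring care are the verifications of properness and completeness of an entire graph, both of which are elementary, and the rest is an invocation of Theorem \ref{theorem 1}.
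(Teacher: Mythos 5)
Your reduction is logically sound: an entire graph is complete (since $g_{ij}=\delta_{ij}+f_if_j\geq\delta_{ij}$ as quadratic forms, intrinsic distance dominates Euclidean distance on the base $\mathbb{R}^n$) and proper (the preimage of a compact set projects into a bounded subset of $\mathbb{R}^n$), and its Gauss image lies in the open hemisphere $\{y\in S^n:\langle y,E_{n+1}\rangle>0\}$ because $\langle N,E_{n+1}\rangle=(1+|Df|^2)^{-1/2}>0$; since the paper's proof of Theorem \ref{theorem 1} is independent of Theorem \ref{theorem 3}, there is no circularity, and the open-hemisphere case of Theorem \ref{theorem 1} does give the hyperplane conclusion. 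However, this is a genuinely different route from the paper's. The authors do not derive Theorem \ref{theorem 3} from Theorem \ref{theorem 1}; they give a direct, self-contained PDE proof: they write out the graph equation $\sum_{i,j}g^{ij}f_{ij}=-f+\sum_i x_if_i+\lambda\sqrt{1+|Df|^2}$, introduce the elliptic operator $L_{(\lambda,f)}$, show by computation (in a frame where $Df=(f_1,0,\dots,0)$) that $\psi=\log\det(g_{ij})$ satisfies $L_{(\lambda,f)}\psi\geq\tfrac12\sum_{i,j}g^{ij}\psi_i\psi_j$, and then invoke their maximum-principle Proposition \ref{lemma 1} (built on the cutoff function $h$ and the growth condition \eqref{eq:2.1}) to force $\psi$ constant, hence $f_{ij}\equiv 0$ and $X(M)$ a hyperplane. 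The trade-off: your argument is much shorter but inherits the full weight of Theorem \ref{theorem 1}, whose proof the paper only sketches by reference to Ding--Xin--Yang's weighted-harmonic-map machinery together with the area-growth estimate of Lemma \ref{lemma 2}; the paper's argument for Theorem \ref{theorem 3} is elementary and entirely self-contained, requiring neither properness-based volume estimates nor any harmonic map theory, which is presumably why the authors present it separately rather than as a corollary of Theorem \ref{theorem 1}.
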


\begin{remark}
In the case of $\lambda=0$, that is, in the case of self-shrinkers,  Ecker and Huisken \cite{[EH]} proved
that $X:M\rightarrow \mathbb{R}^{n+1}$ is a hyperplane if it is an entire graphic self-shrinker
with polynomial area growth in $\mathbb{R}^{n+1}$. Recently, Wang \cite{[W]} removed the assumption of polynomial area growth {\rm (}cf. Ding and Wang
\cite{DW}{\rm )}.
\end{remark}

\vskip3mm
\noindent
{\bf Acknowledgement.}
Authors  would like to express their gratitude  to
Professor H. Li  for invaluable discussions.

\section{Proof of Theorem \ref{theorem 3}}
\noindent
Letting $X:M\rightarrow \mathbb{R}^{n+1}$ be an $n$-dimensional entire graphic hypersurface in the Euclidean space $\mathbb{R}^{n+1}$, then we can write
\begin{equation}
X=(x_1,\cdots, x_n,f)=(x,f),
\end{equation}
where $x=(x_1,\cdots, x_n), f=f(x_1,\cdots, x_n)$.
Denote the orthonormal basis of  $\mathbb{R}^{n+1}$ by $\{ E_1, E_2, \cdots, E_{n+1}\}$. We know that tangent vectors of $X:M\rightarrow \mathbb{R}^{n+1}$
are given by
$$
e_i=E_i+f_iE_{n+1},
$$
for $i=1, 2, \cdots, n$, where $f_i=\frac{\partial f}{\partial x_i}$. The induced metric on $M$ is given by
\begin{equation}
g_{ij}=\langle e_i,e_j\rangle=\delta_{ij}+f_if_j,
\end{equation}
where $\langle \cdot,\cdot\rangle$ is the canonical inner product in $\mathbb{R}^{n+1}$. The unit normal vector $N$ is given by
\begin{equation}
N=\frac{1}{\sqrt{1+|Df|^2}}\bigl(-\sum_i f_iE_i+E_{n+1}\bigl),
\end{equation}
where $Df=(f_1,\cdots,f_n)$ and $|Df|^2=f_1^2+\cdots+f_n^2$.
The mean curvature $H$ of $M$ is given by
\begin{equation}\label{eq:10}
H=\sum_{i,j}g^{ij}\langle f_{ij}E_{n+1},N\rangle=\sum_{i,j}\frac{g^{ij}f_{ij}}{\sqrt{1+|Df|^2}},
\end{equation}
where $f_{ij}=\frac{\partial^2 f}{\partial x_i\partial x_j}$.
On the other hand,
\begin{equation}\label{eq:11}
\langle X,N\rangle=\frac{f-\sum_ix_if_i}{\sqrt{1+|Df|^2}}.
\end{equation}
From \eqref{eq:11.1}, \eqref{eq:10} and \eqref{eq:11}, we get
\begin{equation}
\sum_{i,j}\frac{g^{ij}f_{ij}}{\sqrt{1+|Df|^2}}+\frac{f-\sum_ix_if_i}{\sqrt{1+|Df|^2}}=\lambda,
\end{equation}
that is,
\begin{equation}\label{eq:2.3}
\sum_{i,j}g^{ij}f_{ij}=-f+\sum_ix_if_i+\lambda \sqrt{1+|Df|^2}.
\end{equation}
We shall consider a differential  operator $L_{(\lambda,   f)}$ defined by
\begin{equation}
L_{(\lambda, f)}\psi=\sum_{i,j}a^{ij}\psi_{ij}-\langle x,D\psi\rangle-\lambda\frac{\langle Df,D\psi\rangle}
{\sqrt{1+|Df|^2}},
\end{equation}
where $\lambda$ is constant, $f$ and $\psi$  are functions on $\mathbb{R}^{n}$, $(a^{ij})$ is the inverse of a positive definite matrix
$(a_{ij})$, $x=(x_1,\cdots,x_n)\in \mathbb{R}^{n}$, $\psi_i=\frac{\partial \psi}{\partial x_i}$, $D\psi=(\psi_1,\cdots,\psi_n)$, $\psi_{ij}=\frac{\partial^2 \psi}{\partial x_i\partial x_j}$. The following proposition  is very important  in order to prove the theorem \ref{theorem 3}.

\begin{proposition}\label{lemma 1}  Assume  the minimum eigenvalue $\mu(x)$ of the matrix $(a_{ij})$ at $x\in M$ satisfies
\begin{equation}\label{eq:2.1}
\liminf_{|x|\rightarrow +\infty}\mu(x)(|x|^2-|\lambda||x|)>n.
\end{equation}
For a $C^2$-function $\psi$, if there is a positive constant $\varepsilon$  such that
\begin{equation}
L_{(\lambda, f)}\psi\geq \varepsilon\sum_{i,j}a^{ij}\psi_i\psi_j,
\end{equation}
then $\psi$ is constant.
\end{proposition}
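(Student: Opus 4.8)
The plan is to convert the hypothesis into a statement about a genuine subsolution of $L_{(\lambda,f)}$ and then run a maximum principle in which $|x|^2$ plays the role of a barrier whose strength is governed exactly by the eigenvalue condition \eqref{eq:2.1}. First I would set $u=e^{\varepsilon\psi}>0$. Differentiating twice gives
$$
L_{(\lambda,f)}u=\varepsilon u\,L_{(\lambda,f)}\psi+\varepsilon^2 u\sum_{i,j}a^{ij}\psi_i\psi_j ,
$$
so the hypothesis $L_{(\lambda,f)}\psi\geq\varepsilon\sum_{i,j}a^{ij}\psi_i\psi_j$ yields $L_{(\lambda,f)}u\geq 2\varepsilon^2 u\sum_{i,j}a^{ij}\psi_i\psi_j\geq 0$, the inequality being strict at every point where $D\psi\neq0$. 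Thus $u$ is a positive subsolution, and proving that $u$ is constant is equivalent to the assertion that $\psi$ is constant.

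Next I would compute the action of $L_{(\lambda,f)}$ on the barrier $|x|^2$. Since $D(|x|^2)=2x$ and its Hessian is $2\,\mathrm{Id}$,
$$
L_{(\lambda,f)}(|x|^2)=2\sum_i a^{ii}-2|x|^2-2\lambda\frac{\langle Df,x\rangle}{\sqrt{1+|Df|^2}} .
$$
Bounding $\sum_i a^{ii}=\mathrm{tr}(a^{ij})\leq n/\mu(x)$ (the largest eigenvalue of $(a^{ij})$ is $1/\mu(x)$) together with $|\langle Df,x\rangle|/\sqrt{1+|Df|^2}\leq|x|$ gives
$$
L_{(\lambda,f)}(|x|^2)\leq\frac{2n}{\mu(x)}-2\bigl(|x|^2-|\lambda||x|\bigr).
$$
By \eqref{eq:2.1} there are $\delta>0$ and $R_0>0$ with $\mu(x)(|x|^2-|\lambda||x|)\geq n+\delta$ for $|x|\geq R_0$; substituting this back shows $L_{(\lambda,f)}(|x|^2)\leq-\tfrac{2\delta}{n+\delta}(|x|^2-|\lambda||x|)<0$ for $|x|\geq R_0$, and in fact $L_{(\lambda,f)}(|x|^2)\to-\infty$ as $|x|\to\infty$.

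Then, for each $\sigma>0$, I would consider $\phi_\sigma=u-\sigma|x|^2$. Combining the two computations, $L_{(\lambda,f)}\phi_\sigma=L_{(\lambda,f)}u-\sigma L_{(\lambda,f)}(|x|^2)>0$ whenever $|x|>R_0$, so $\phi_\sigma$ is a strict subsolution outside $B_{R_0}$ and therefore admits no interior local maximum there (at such a point the gradient would vanish and the Hessian be negative semidefinite, forcing $L_{(\lambda,f)}\phi_\sigma\leq0$). I would use this to confine $\sup\phi_\sigma$ to the fixed ball $\overline{B_{R_0}}$, attained at some $x_\sigma$, so that $u(x)\leq u(x_\sigma)+\sigma(|x|^2-|x_\sigma|^2)$ for all $x$. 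Letting $\sigma\to0$ along a subsequence with $x_\sigma\to x_\ast\in\overline{B_{R_0}}$ yields $u(x)\leq u(x_\ast)$ for every $x$, so $u$ attains its global maximum at the interior point $x_\ast$. Since $L_{(\lambda,f)}$ is second order elliptic with no zeroth-order term and $L_{(\lambda,f)}(u-u(x_\ast))=L_{(\lambda,f)}u\geq0$ with $u-u(x_\ast)\leq0$ attaining the value $0$ at $x_\ast$, Hopf's strong maximum principle forces $u\equiv u(x_\ast)$; hence $D\psi\equiv0$ and $\psi$ is constant.

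The main obstacle is precisely the localization step, namely ruling out that $\sup\phi_\sigma$ escapes to infinity, since a priori $u=e^{\varepsilon\psi}$ need not be bounded. This is exactly what the strength of the barrier is designed to control: because $L_{(\lambda,f)}(|x|^2)\to-\infty$, the strict subsolution $\phi_\sigma$ cannot sustain values exceeding $\max_{\overline{B_{R_0}}}\phi_\sigma$ on an unbounded super-level set — a bounded component of $\{\phi_\sigma>\max_{\overline{B_{R_0}}}\phi_\sigma\}$ would produce a forbidden interior maximum, while the divergence of $-\sigma L_{(\lambda,f)}(|x|^2)$ must be reconciled with near-maximal behaviour at infinity. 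Turning this into a rigorous maximum principle at infinity, driven by \eqref{eq:2.1}, is the heart of the matter; the exponential substitution and the final application of the strong maximum principle are then routine.
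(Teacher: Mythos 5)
Your computations of $L_{(\lambda,f)}u$ for $u=e^{\varepsilon\psi}$, of $L_{(\lambda,f)}(|x|^2)$, and the eigenvalue bound $\sum_ia^{ii}\leq n/\mu(x)$ are all correct, and the final limiting argument plus the strong maximum principle would indeed finish the proof \emph{if} you could confine $\sup\phi_\sigma$ to $\overline{B}_{R_0}$. But that localization step --- which you yourself flag as ``the heart of the matter'' --- is precisely where the proposal has a genuine gap, and the heuristic you offer for it is false. The fact that $\phi_\sigma=u-\sigma|x|^2$ is a strict subsolution outside $B_{R_0}$ only excludes an \emph{attained} interior maximum there; it does not prevent the supremum from escaping to infinity, and nothing in your argument bounds the growth of $u=e^{\varepsilon\psi}$. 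Concretely, take $a_{ij}=\delta_{ij}$, $\lambda=0$, so that $L=\Delta-\langle x,D\cdot\rangle$ and \eqref{eq:2.1} holds trivially, and take $u=e^{|x|^2/2}$: then $Lu=nu>0$ and $L(u-\sigma|x|^2)=nu-2\sigma(n-|x|^2)>0$ for $|x|$ large, so $\phi_\sigma$ is a strict subsolution outside a ball whose supremum is $+\infty$, attained at infinity, with no forbidden interior maximum anywhere. This shows the claim ``a strict subsolution cannot sustain values exceeding $\max_{\overline{B}_{R_0}}\phi_\sigma$ on an unbounded super-level set'' is simply not true at this level of generality. (Of course this $u$ comes from $\psi=|x|^2/(2\varepsilon)$, which violates the hypothesis $L\psi\geq\varepsilon\sum a^{ij}\psi_i\psi_j$ at infinity --- but your additive-barrier scheme never uses the gradient term of the hypothesis after forming $\phi_\sigma$, so it cannot distinguish this case and cannot close the gap.)

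The paper's proof supplies exactly the missing mechanism, and it is multiplicative rather than additive: one maximizes $h\,e^{C\psi}$ with $0<C<\varepsilon$, where $h$ is the cutoff \eqref{eq:1.1} equal to $1$ on $B_{R_0}$ and decaying like $1-t(|x|^2-R_0^2)$ outside. Since $h$ becomes nonpositive at a \emph{finite} radius, the set $\{h>0\}$ is bounded and $h\,e^{C\psi}$ attains its maximum at an interior point $p$ of that set no matter how fast $\psi$ grows --- compactness is automatic, with no growth hypothesis needed. The strong maximum principle on nested balls (together with $t$ small) pushes $p$ into $\{|x|>R_0\}$, and there the first-order condition $h_i+Ch\psi_i=0$ converts $Dh$ into $D\psi$, so the quadratic gradient term of the hypothesis survives into the second-order inequality with the favorable coefficient $C(\varepsilon-C)h\sum a^{ij}\psi_i\psi_j\geq0$, while $-\langle x,Dh\rangle=2t|x|^2$ dominates $2t\sum_ia^{ii}+2t|\lambda||x|$ by \eqref{eq:2.1}; the resulting strict inequality \eqref{eq:10.01} gives the contradiction. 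In short: your proposal discards the gradient-term information exactly where it is indispensable, and the strict gap $C<\varepsilon$ in the multiplicative test function $h\,e^{C\psi}$ is the idea you are missing. To repair your write-up you would essentially have to reintroduce it, at which point you recover the paper's argument.
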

\begin{proof}
From the condition \eqref{eq:2.1}, there is a real number $R_1$ such that if $x\in (\mathbb{R}^{n}\setminus B_{R_1})$, then
\begin{equation}
\sum_i a^{ii}<|x|^2-|\lambda||x|,
\end{equation}
where $B_{R_1}$ is an open ball of radius $R_1$ centered at origin.
We consider a function $h(x)$ on $\mathbb{R}^{n}$ defined by
\begin{equation}\label{eq:1.1}
 h(x)= {\begin{cases}
     \ \ 1,& \ \  |x|\leq R_0 \\
    \  -t(|x|^2-R_0^2)+1 , & \ \ |x|\geq R_0,
     \end{cases}}
  \end{equation}
where $R_0\geq R_1$ and $t$,  $0<t<1$,   are constant.  We know that  the function $h e^{C\psi}$  attains its maximum at a point $p \in \{x\in \mathbb{R}^{n}: h>0\}$, where $C<\varepsilon$ is a positive constant.
If  $\psi$ is not constant in $\mathbb{R}^{n}$, then there is a ball $B_{R_0}$ with $R_0\geq R_1$ such that $\psi$ is not constant in $B_{R_0}$. Suppose  the function $\psi$ attains its maximum at a point  $q\in B_{R_0}$.  We obtain from the strong maximum principle that $\psi$ is constant
since $L_{(\lambda, f)}$ is a linear  elliptic operator and $L_{(\lambda, f)}\psi\geq0$. It is  a contradiction. Hence, $\psi$ attains its maximum  only on the boundary $\partial B_{R_0}$. By the same assertion, in
$B_{\sqrt{R_0^2+1}}$, $\psi$ attains its maximum  only on the boundary $\partial B_{\sqrt{R_0^2+1}}$. We can assume
$\max\limits_{\overline{B}_{R_0}}\psi=\psi(p_1)$ and
$\max\limits_{\overline{B}_{\sqrt{R_0^2+1}}}\psi=\psi(p_2)$ where  $p_1 \in \partial B_{R_0}$   and $p_2 \in \partial B_{\sqrt{R_0^2+1}}$.
Then we have $\psi(p_1)<\psi(p_2)$. Therefore, as long as we choose $t$ sufficiently small, we obtain
\begin{equation}
(h e^{C\psi})(p_1)=(e^{C\psi})(p_1)<((1-t)e^{C\psi})(p_2)=(h e^{C\psi})(p_2).
\end{equation}
This means that the maximum of  $h e^{C\psi}$ can only be attained  in the set $\{x\in \mathbb{R}^{n}: |x|>R_0\geq R_1\}$.

\noindent
If $p\in\{x\in \mathbb{R}^{n}: |x|>R_0\}$, then, at point $p$, we have
\begin{equation}\label{eq:2.0}
h_i+Ch\psi_i=0,
\end{equation}
and
\begin{equation}\label{eq:2.00}
0\geq \sum_{i,j}a^{ij}(h e^{C\psi})_{ij}
\end{equation}
since $(a^{ij})$ is positive definite.
By a direct calculation, we have, from the definition of $h$ and   \eqref{eq:2.0},
\begin{equation}\label{eq:2.2}
\aligned
&\ \ \ e^{-C\psi} \sum_{i,j}a^{ij}(h e^{C\psi})_{ij}\\
&=\sum_{i,j}a^{ij}h_{ij}+2C\sum_{i,j}a^{ij}h_i\psi_j
   +Ch\sum_{i,j}a^{ij}\psi_{ij}+C^2h\sum_{i,j}a^{ij}\psi_i\psi_j\\
&\geq -2t\sum_i a^{ii}+2C\sum_{i,j}a^{ij}(-Ch\psi_i)\psi_j+C^2h\sum_{i,j}a^{ij}\psi_i\psi_j\\
&\ \ \
 +Ch\biggl(\langle x,D\psi\rangle
+\varepsilon\sum_{i,j}a^{ij}\psi_i\psi_j
 +\lambda\frac{\langle Df,D\psi\rangle}{\sqrt{1+|Df|^2}}\biggl)\\
&=-\langle x,D h\rangle-2t\sum_1a^{ii}+C(\varepsilon-C)h\sum_{i,j}a^{ij}\psi_i\psi_j
  +Ch\lambda \frac{\langle Df,D\psi\rangle}{\sqrt{1+|Df|^2}}\\
&=2t(|x|^2-\sum_ia^{ii})+C(\varepsilon-C)h\sum_{i,j}a^{ij}\psi_i\psi_j
  -\lambda\sum_i\frac{f_ih_i}{\sqrt{1+|Df|^2}}\\
&=2t\biggl(|x|^2-\sum_ia^{ii}+\lambda\sum_i\frac{f_ix_i}{\sqrt{1+|Df|^2}}\biggl)
 +C(\varepsilon-C)h\sum_{i,j}a^{ij}\psi_i\psi_j\\
&\geq2t\biggl(|x|^2-\sum_ia^{ii}-|\lambda||x|\biggl)
 +C(\varepsilon-C)h\sum_{i,j}a^{ij}\psi_i\psi_j.
\endaligned
\end{equation}
Since  $p\in\{x\in\mathbb{R}^{n}:|x|>R_0\}$,  $R_0\geq R_1$ and
  $C<\varepsilon$, then we obtain from \eqref{eq:2.00} and \eqref{eq:2.2}
\begin{equation}\label{eq:10.01}
\aligned
 0&\geq e^{-C\psi}\sum_{i,j}a^{ij}(h e^{C\psi})_{ij}\\
&\geq2t\biggl(|x|^2-\sum_ia^{ii}-|\lambda||x|\biggl)
 +C(\varepsilon-C)h\sum_{i,j}a^{ij}\psi_i\psi_j>0.
\endaligned
\end{equation}
This is a contradiction. Hence, $h e^{C\psi}$ does not attain its maximum in
 $\{x\in \mathbb{R}^{n}: |x|>R_0\geq R_1\}$.
 Thus,  $\psi$ must be  constant.
\end{proof}

\vskip 2mm
\noindent
{\it Proof of  Theorem \ref{theorem 3}}.
Since $(g_{ij})=(\delta_{ij}+f_if_j)$ is the induced  metric, we know that $(g_{ij})$ a positive definite matrix. Taking $(a_{ij})=(g_{ij})=(\delta_{ij}+f_if_j)$ in the proposition
\ref{lemma 1}
we know $(g_{ij})$ satisfies the condition  \eqref{eq:2.1}.
Putting $\psi=\log\det (g_{ij})$, we have
\begin{equation}
\psi_i=\sum_{p,q}g^{pq}\frac{\partial g_{pq}}{\partial x_i}, \quad \psi_{ij}=\sum_{p,q}(\dfrac{\partial g^{pq}}{\partial x_j}\frac{\partial g_{pq}}{\partial x_i}+g^{pq}\frac{\partial^2 g_{pq}}{\partial x_ix_j}).
\end{equation}
By a direct calculation, we obtain
\begin{equation}
\frac{\partial g^{ip}}{\partial x_l}=-\sum_{j,k}g^{ij}g^{pk}\frac{\partial g_{jk}}{\partial x_l},
\end{equation}

\begin{equation}\label{eq:2.4}
\aligned
\sum_{i,j}g^{ij}\psi_{ij}=&-2\sum_{i,j,p,k,q,l}g^{ij}g^{pk}g^{ql}f_{pi}f_{q}f_{kj}f_l
       -2\sum_{i,j,p,k,q,l}g^{ij}g^{pk}g^{ql}f_{qi}f_pf_{kj}f_l\\
       &+2\sum_{i,j,p,q}g^{ij}g^{pq}f_{pi}f_{qj}+2\sum_{i,j,p,q}g^{ij}g^{pq}f_{pij}f_q.
\endaligned
\end{equation}
From \eqref{eq:2.3}, we get
\begin{equation}
\frac{\partial (\sum_{i,j}g^{ij}f_{ij})}{\partial x_p}=\langle x,Df_p\rangle+\lambda\sum_i\frac{f_if_{ip}}{\sqrt{1+|Df|^2}}.
\end{equation}
Hence, we have
\begin{equation}\label{eq:2.10}
\sum_{i,j}g^{ij}f_{ijp}=\langle x,Df_p\rangle+\lambda\sum_i\frac{f_if_{ip}}{\sqrt{1+|Df|^2}}
+2\sum_{i,j,k,l}g^{ik}g^{jl}f_{kp}f_lf_{ij}.
\end{equation}
From \eqref{eq:2.4} and \eqref{eq:2.10}, we obtain
\begin{equation}
\aligned
\sum_{i,j}g^{ij}\psi_{ij}=&-2\sum_{i,j,p,k,q,l}g^{ij}g^{pk}g^{ql}f_{pi}f_{q}f_{kj}f_l
       +2\sum_{i,j,p,k,q,l}g^{ij}g^{pk}g^{ql}f_{qi}f_pf_{kj}f_l\\
       &+2\sum_{i,j,p,q}g^{ij}g^{pq}f_{pi}f_{qj}+2\sum_{p,q}g^{pq}f_q\langle x,Df_p\rangle\\
       &+2\lambda\sum_{i,p,q}g^{pq}f_q\frac{f_if_{ip}}{\sqrt{1+|Df|^2}}.
\endaligned
\end{equation}
On the other hand, because of
\begin{equation}
-\lambda\frac{\langle Df,D\psi\rangle}{\sqrt{1+|Df|^2}}=-2\lambda\sum_{i,p,q}\frac{g^{pq}f_if_qf_{pi}}{\sqrt{1+|Df|^2}},
\end{equation}
 we have
\begin{equation}\label{eq:2.5}
\aligned
L_{(\lambda, f)}\psi&=\sum_{i,j}g^{ij}\psi_{ij}-\langle x,D\psi\rangle-\lambda\frac{\langle Df,D\psi\rangle}
   {\sqrt{1+|Df|^2}}\\
   &=-2\sum_{i,j,p,k,q,l}g^{ij}g^{pk}g^{ql}f_{pi}f_{q}f_{kj}f_l
       +2\sum_{i,j,p,k,q,l}g^{ij}g^{pk}g^{ql}f_{qi}f_pf_{kj}f_l\\
       &\ \ \  +2\sum_{i,j,p,q}g^{ij}g^{pq}f_{pi}f_{qj}.
\endaligned
\end{equation}
At any fixed point, we can choose a coordinate system $\{x_1,\cdots,x_n\}$ such that
\begin{equation}\label{eq:2.6}
{D f}=(f_1,  0, \cdots, 0).
\end{equation}
 Then we have from \eqref{eq:2.5} and \eqref{eq:2.6}
\begin{equation*}
\aligned
L_{(\lambda, f)}\psi&=-2\sum_{i,p,q}\frac{f_q^2(f_{pi})^2}{(1+f_i^2)(1+f_p^2)(1+f_q^2)}
   +2\sum_{i,p}\frac{(f_{pi})^2}{(1+f_i^2)(1+f_p^2)}\\
  &\ \ \ +2\sum_{i,p,q}\frac{f_pf_qf_{pi}f_{qi}}{(1+f_i^2)(1+f_p^2)(1+f_q^2)}\\
  &=2\sum_i\frac{1}{(1+f_i^2)}\biggl(\frac{f_1f_{1i}}{1+f_1^2}\biggl)^2
  +2\sum_{i,p}\frac{(f_{pi})^2}{(1+f_i^2)(1+f_p^2)(1+f_1^2)}\\
  &=\frac{1}{2}\sum_{i,j}g^{ij}\psi_i\psi_j+2\sum_{i,p}\frac{(f_{pi})^2}{(1+f_i^2)(1+f_p^2)(1+f_1^2)}\\
  &\geq \frac{1}{2}\sum_{i,j}g^{ij}\psi_i\psi_j.
\endaligned
\end{equation*}
Thus, at any point, we have
\begin{equation}\label{eq:2.7}
\aligned
L_{(\lambda, f)}\psi&
  &\geq \frac{1}{2}\sum_{i,j}g^{ij}\psi_i\psi_j.
\endaligned
\end{equation}
From the proposition \ref{lemma 1}, we have $\psi$ is constant. Therefore,  $f_{pi}=0$ for any $p$ and $i$ from \eqref{eq:2.7}. Hence $f$ is a linear function, that is, $X:M\rightarrow \mathbb{R}^{n+1}$ is a hyperplane.
$$
\eqno{\Box}
$$

\section{Proofs of Theorem \ref{theorem 1} and Theorem \ref{theorem 2}}

\noindent
In order to prove the theorem \ref{theorem 1} and the theorem \ref{theorem 2},
we need the following lemma due to Cheng and Wei \cite{cw2}.

\begin{lemma}\label{lemma 2}
Let $X: M\rightarrow \mathbb{R}^{n+1}$ be a complete and non-compact properly immersed
$\lambda$-hypersurface in the Euclidean space $\mathbb{R}^{n+1}$.
Then, there is a positive constant $C$ such that for $r\geq1$,
\begin{equation}
{\rm Area}(B_r(0)\cap X(M))=\int_{B_r(0)\cap X(M)}d\mu\leq Cr^{n+\frac{\lambda^2}2-2\beta-\frac{\inf H^2}2},
\end{equation}
where $B_r(0)$ is a round ball in $\mathbb{R}^{n+1}$ with radius $r$ and centered at the origin,  $\beta=\frac{1}{4}\inf(\lambda-H)^2$.
\end{lemma}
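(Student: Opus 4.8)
The plan is to run the whole argument through the extrinsic distance function $u=|X|^2$, whose Laplacian on $M$ carries all the relevant information. Starting from the general identity $\Delta|X|^2=2n+2\langle X,\mathbf H\rangle$ and using that for a $\lambda$-hypersurface $\mathbf H=HN$, together with the defining relation $\langle X,N\rangle=\lambda-H$ coming from \eqref{eq:11.1}, I would compute
\[
\Delta|X|^2=2n+2H\langle X,N\rangle=2n+2\lambda H-2H^2=2n+\lambda^2-H^2-\langle X,N\rangle^2 .
\]
This pointwise identity is the heart of the matter: the two subtracted squares are exactly the quantities that surface in the exponent, and they appear for free from the $\lambda$-hypersurface equation.

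Since $H^2\ge\inf H^2$ and $\langle X,N\rangle^2=(\lambda-H)^2\ge\inf(\lambda-H)^2=4\beta$, the identity immediately gives the one-sided bound
\[
\Delta|X|^2\le 2n+\lambda^2-\inf H^2-4\beta=:2k,\qquad k=n+\tfrac{\lambda^2}{2}-2\beta-\tfrac{\inf H^2}{2},
\]
and $k$ is precisely the exponent claimed in the lemma. I would also record two facts needed downstream: the gradient estimate $|\nabla^M|X|^2|=2|X^{\top}|\le 2|X|$, i.e. $|\nabla u|^2\le 4u$; and that properness forces each sublevel set $\{u<r^2\}=B_r(0)\cap X(M)$ to be precompact, which legitimizes all of the integrations by parts below.

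To convert this subelliptic bound into an area estimate I would follow the comparison method developed for self-shrinkers by Cheng and Zhou. Introduce
\[
W(r)=\int_{B_r(0)\cap X(M)}\bigl(r^2-|X|^2\bigr)\,d\mu\ge 0 .
\]
Differentiating gives $W'(r)=2r\,\mathrm{Area}\!\left(B_r(0)\cap X(M)\right)$, while integrating $\mathrm{div}\bigl((r^2-u)\nabla u\bigr)$ over $\{u<r^2\}$ (the boundary term vanishes because $r^2-u=0$ there) yields
\[
\int_{B_r(0)\cap X(M)}|\nabla u|^2\,d\mu=\int_{B_r(0)\cap X(M)}(r^2-u)\,\Delta u\,d\mu\le 2k\,W(r).
\]
Feeding these relations, the gradient bound $|\nabla u|^2\le 4u$, and properness into a monotonicity argument for a quantity of the shape $W(r)/r^{k+2}$, and then extracting the area from $W$ through the elementary inequality $\mathrm{Area}\!\left(B_r(0)\cap X(M)\right)\le W(2r)/(3r^2)$, I expect to arrive at $\mathrm{Area}\!\left(B_r(0)\cap X(M)\right)\le C r^{k}$ for $r\ge 1$, which is the asserted bound.

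The main obstacle is exactly this last comparison step. The one-sided Laplacian bound and the gradient bound are not, by themselves, coercive enough to forbid mass from concentrating near the boundary sphere $\{|X|=r\}$, so the delicate point is to run the monotonicity for the \emph{extrinsic} distance function—along whose intrinsic level sets $|\nabla u|$ may degenerate—while genuinely using properness to rule out such concentration and thereby preserve the sharp exponent $k$ rather than a lossy $r^{n}$. By contrast, the pointwise identity in the first step is robust and essentially dictated by \eqref{eq:11.1}, so I would regard it as the secure part of the argument and devote the real care to organizing the integral comparison.
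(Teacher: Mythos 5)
Your opening computation is correct and coincides with how the cited proof actually begins: with the paper's conventions one has $\Delta|X|^2=2n+2H\langle X,N\rangle=2n+\lambda^2-H^2-\langle X,N\rangle^2$, and since $\langle X,N\rangle^2=(\lambda-H)^2\geq 4\beta$ and $H^2\geq\inf H^2$ this gives $\Delta u\leq 2k$ for $u=|X|^2$, with $k=n+\tfrac{\lambda^2}{2}-2\beta-\tfrac{\inf H^2}{2}$ exactly the exponent of the lemma. (Note the paper itself contains no proof of this lemma: it is quoted from Cheng--Wei \cite{cw2}, where the argument is the Ding--Xin/Cheng--Zhou volume-estimate scheme for properly immersed self-shrinkers adapted to $\lambda$-hypersurfaces.) Your auxiliary steps are also fine: properness makes the sublevel sets $\{u<r^2\}$ compact, $W'(r)=2r\,\mathrm{Area}(B_r(0)\cap X(M))$, the Stokes identity $\int_{\{u<r^2\}}|\nabla u|^2\,d\mu=\int_{\{u<r^2\}}(r^2-u)\Delta u\,d\mu\leq 2kW(r)$ holds for a.e.\ $r$, and $\mathrm{Area}(B_r(0)\cap X(M))\leq W(2r)/(3r^2)$ is elementary.

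The gap is the one you yourself flag, and it is fatal as framed, not merely delicate: the three facts you propose to feed into the monotonicity --- $\Delta u\leq 2k$, $|\nabla u|^2\leq 4u$, and properness of $u$ --- do \emph{not} imply $\mathrm{Area}\leq Cr^k$, nor any fixed polynomial bound. For instance, on $\mathbb{R}^m$ the proper function $u=(1+|x|^2)^{\varepsilon}$ satisfies $\Delta u\leq 2k$ and $|\nabla u|^2\leq 4u$ for $\varepsilon$ small, yet $\mathrm{vol}\{u<r^2\}\sim r^{m/\varepsilon}$, which can exceed any prescribed power; so no abstract comparison argument can close from your recorded data, and in particular $W(r)/r^{k+2}$ is not monotone under these hypotheses alone. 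What you discarded too early is the \emph{exact} gradient identity $|\nabla u|^2=4|X^{\top}|^2=4u-4(\lambda-H)^2$, equivalently $\Delta u=2n+\lambda^2-H^2-u+\tfrac14|\nabla u|^2$, or in drift form $\Delta u-\langle X,\nabla u\rangle=2n+2\lambda\langle X,N\rangle-2u$. This is the coercivity that rules out concentration near $\{|X|=r\}$: the gradient $|\nabla u|$ can only degenerate where $(\lambda-H)^2\approx u$, and precisely there $\Delta u\approx 2n+\lambda^2-H^2-u$ is very negative. The actual proof in \cite{cw2}, following Cheng--Zhou, computes $\int_{\{u<r^2\}}|\nabla u|^2$ in two ways --- once by Stokes as $\int(r^2-u)\Delta u$, once via the identity as $\int\bigl(4u-4(\lambda-H)^2\bigr)$ --- equates the two, and only \emph{then} inserts the bounds $H^2\geq\inf H^2$ and $(\lambda-H)^2\geq 4\beta$ with a careful treatment of the resulting sign-indefinite $H^2$-terms; this yields a genuine differential inequality of the shape $rW'(r)\leq(k+2)W(r)+(\text{controlled terms})$, hence $W(r)\leq Cr^{k+2}$ and the area bound. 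So keep $(\lambda-H)^2$ exact in the gradient identity rather than replacing it by its infimum at the pointwise stage; that single retained identity is the difference between your sketch and a proof.
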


\noindent The next lemma is essentially due to Ding, Xin and Yang \cite{[DXY]}.

\begin{lemma}\label{lemma 3}
Let $X: M\rightarrow \mathbb{R}^{n+1}$ be a complete immersed
$\lambda$-hypersurface in the Euclidean space $\mathbb{R}^{n+1}$.
Then, its Gauss map is a $e^{-\frac{|X|^2}{2}}$-weighted harmonic map.
\end{lemma}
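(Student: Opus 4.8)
The plan is to show that the weighted tension field of the Gauss map vanishes identically. Recall that, for the weight $e^{-f}$ with $f=\frac{|X|^2}{2}$, a map $N\colon M\to S^n$ is $e^{-f}$-weighted harmonic precisely when its $f$-tension field
\[
\tau_f(N)=\tau(N)-dN(\nabla f)
\]
vanishes, where $\tau(N)$ is the ordinary tension field and $\nabla f$ is the gradient on $M$. Since $f=\frac{|X|^2}{2}$, a direct computation gives $\nabla f=X^{T}$, the tangential part of the position vector $X$, because $e_i f=\langle X,e_i\rangle$. The strategy is to compute $\tau(N)$ and $dN(X^{T})$ separately and to see that the $\lambda$-hypersurface equation forces them to coincide.

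First I would fix a local orthonormal frame $\{e_1,\dots,e_n\}$ on $M$, geodesic at the point $p$ under consideration, and record the Weingarten relation $dN(e_i)=\nabla_{e_i}N=-\sum_j h_{ij}e_j$ together with the Gauss formula $\bar\nabla_{e_i}e_j=\nabla^M_{e_i}e_j+h_{ij}N$ in the flat ambient space. Computing the tension field of $N$ as a map into $S^n\subset\mathbb{R}^{n+1}$, one differentiates $dN(e_i)$, uses that at $p$ the intrinsic connection terms drop out, and projects onto $T_{N}S^n=N^{\perp}$ to discard the multiple of $N$ (which is the term $|A|^2N$). This is the Ruh--Vilms type calculation; invoking the Codazzi equation $\sum_i\nabla_i h_{ij}=\nabla_j H$ in the flat ambient yields
\[
\tau(N)=-\sum_j(\nabla_j H)\,e_j=-\nabla^M H,
\]
where we identify $T_{N}S^n=N^{\perp}=T_pM$ since $M$ is a hypersurface.

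Next I would differentiate the defining equation $\langle X,N\rangle+H=\lambda$, i.e.\ $H=\lambda-\langle X,N\rangle$. Using $\langle e_i,N\rangle=0$ and the Weingarten relation gives
\[
\nabla_j H=\sum_i h_{ij}\langle X,e_i\rangle,
\]
so that $\nabla H$ is expressed through the second fundamental form and the tangential position vector. Feeding this into the drift term and using $\nabla f=X^{T}=\sum_i\langle X,e_i\rangle e_i$, the linearity of $dN$ together with the Weingarten relation gives
\[
dN(\nabla f)=\sum_i\langle X,e_i\rangle\,dN(e_i)=-\sum_{j}\Big(\sum_i h_{ij}\langle X,e_i\rangle\Big)e_j=-\nabla^M H.
\]
Comparing the two displays, $\tau_f(N)=\tau(N)-dN(\nabla f)=-\nabla^M H-(-\nabla^M H)=0$, which is exactly the assertion that $N$ is $e^{-f}$-weighted harmonic.

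The only genuinely delicate point is the tension-field computation: one must keep careful track of the sign conventions in the Weingarten and Gauss formulas and correctly project the second covariant derivative of $N$ onto $T_{N}S^n$, discarding the component along $N$ (which reflects that $N$ takes values in the sphere). Once $\tau(N)=-\nabla^M H$ is established, the remainder is the algebraic cancellation driven by the $\lambda$-hypersurface equation; the constancy of $\lambda$ is what makes the drift term $dN(\nabla f)$ reproduce $\tau(N)$ exactly, generalizing the classical fact that a constant-mean-curvature hypersurface has harmonic Gauss map.
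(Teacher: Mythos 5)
Your proof is correct: the Ruh--Vilms computation $\tau(N)=-\nabla^M H$, the identity $\nabla f=X^T$, and the differentiation of $H=\lambda-\langle X,N\rangle$ (where the constant $\lambda$ drops out) all check, giving $\tau_f(N)=0$. This is essentially the same argument the paper relies on --- it offers no details and simply attributes the lemma to Ding--Xin--Yang, whose self-shrinker computation you have reproduced and extended verbatim to general constant $\lambda$.
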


\noindent By using the above two lemmas and the same assertion  as  that of \cite{[DXY]}, we can give the proofs of the theorem \ref{theorem 1}
and the theorem \ref{theorem 2}.

\end {document}